\newtheorem*{theorem}{Theorem}
\theoremstyle{definition}
\def\R{{\mathbb R}}
\title{Octagon and tropical octagon yield braid invariants}
\author{Vassily Olegovich Manturov}
\begin{document}

\maketitle

\begin{abstract}
In the present paper, we construct an invariant
of braids in the real projective plane which 
corresponds to an ``action'' of braids on certain
graphs in $\R{}P^{2}$ with labels. 
This paper is a sequel of papers \cite{M},\cite{KM}.

It demonstrates once more that solutions to the octagon relation in various forms
give rise to invariants of braids.

\end{abstract}

Keywords: Braid, Desargues, flip, triangulation,
octagon, tropical field.

AMS MSC: 57M25, 57M27, 51A20, 05E14, 14N20, 51M15.

\section{Introduction}
Braids correspond to dynamical systems of points moving on a
$2$-surface (usually, on the plane, but here we
consider the projective plane $\R{}P^{2}$ following \cite{M}).

It often happens that the same equation appears in
various areas of mathematics. Once this is established,
results from one area often give rise to nice constructions in the other.

In \cite{InvariantsAndPictures} (see page 310) the author demonstrated
how solutions to the pentagon equation can be used for
constructing invariants of braids. One more invariant
was constructed by I.Rohozhkin \cite{Roh}.

In \cite{M}, we did a similar thing by using the 
{\em octagon equation}, making braids act on
configurations of lines and points (by using the Desargues
theorem \cite{FP}). We shall not write the octagon equation
but rather draw it.

In the present paper, we construct a simpler
invariant of braids from the octagon relation: the object
which is operated on by braids is simpler than configurations of lines and points (as done in \cite{M}).

The proof of the main theorem here almost repeats that from \cite{M}, however, the main construction does not require
``black'' and ``white'' dots corresponding to 
points and lines, which allows one to consider
not only braids on evenly many strands
but braids with arbitrary number of strands.

It seems that passing from braids in the projective
plane to braids in $\R^{2}$ is a technical issue,
which we shall consider elsewhere.

Let us consider braids on $\R{}P^{2}$ and, by using duality, we shall move
$n$ pairwise distinct lines. Generically, these lines intersect in $n\choose 2$ points.
By Euler characteristic reasons, these lines split the projective plane into
${n \choose 2}+1$ regions.

As points move, whenever no three of them
are collinear, their dual lines  
form a four-valent graph in $\R{}P^{2}$ (in general
position). The dual graph is naturally a 
quadrangulation of $\R{}P^{2}$ (splitting
of $\R{}P^{2}$ into quadrilaterals, see Fig. \ref{below}.

Fix a field $F$.
With vertices of this  graph we
associate formal variables $a,b,\cdots$.
We can consider the formal tropical
field $F(a,b,c,\dots)$ over $F$ generated by
$a,b,c,\cdots$
Later on, a braid will produce other
graphs from the initial one, and we shall 
associate elements from $F(a,b,c,\dots)$ to it.

\begin{figure}
\centering\includegraphics[width=150pt]{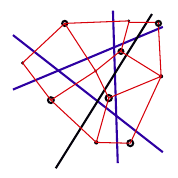}
\caption{Line configuration and quadrangular tiling}
\label{below}
\end{figure}

As lines move generically, the dual graph undergoes a hexagonal flip,
see \footnote{Some pictures are kindly borrowed from the paper
\cite{FP}.} Fig. \ref{Desarguesflip}).

\begin{figure}
\centering\includegraphics[width=150pt]{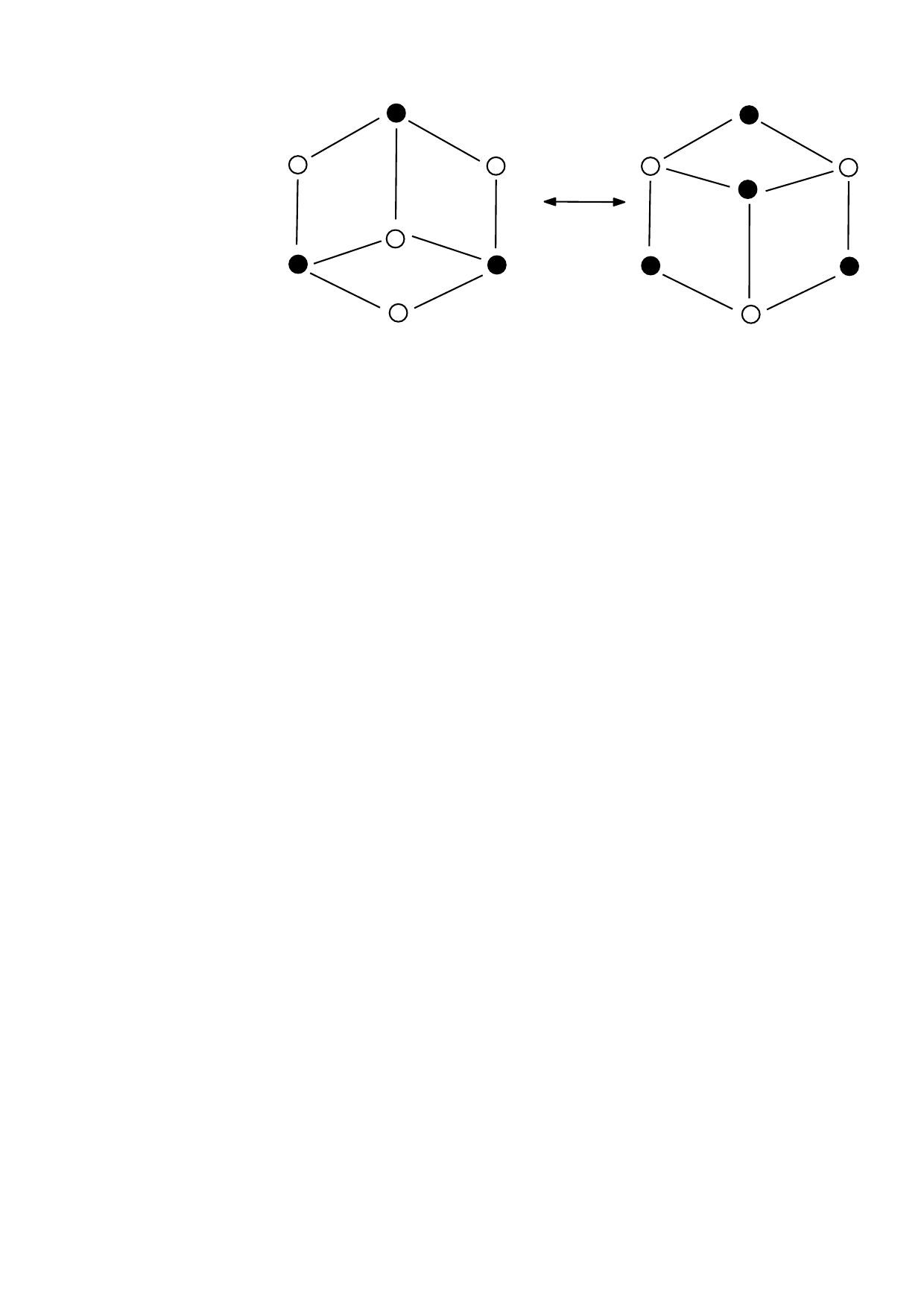}
\caption{The Desargues flip}
\label{Desarguesflip}
\end{figure}

Instead of the existing vertex labeled by $x$
we obtain a new vertex which we denote by
$x$ so that the label $x'$ of the new vertex
is expressed as follows:

$$x\to x'= \frac{ad+be+cf}{x}.
\eqno(1)$$

We call this transformation the {\em Desargues transformation} following \cite{FP}.

This equation is a partial case of the
following equation in a tropical field
$(F,\otimes,\oslash, \oplus)$, where
$a\otimes b = a+b, a\oslash b=a-b, a\oplus b=max(a,b)$.

In this case

$$x'=max(a+d-x,b+e-x,c+f-x) \eqno(2).$$

The general case which contains both (1) and (2) is
given by the formula

$$x\to x'=(a\otimes d\oplus b\otimes e \oplus c\otimes f) \oslash x).
\eqno(3)$$
\section{Acknowledgements}
I am very grateful to Igor Mikhailovich Nikonov, Louis Hirsch Kauffman and Seongjeong Kim for permanent
discussion of my current work.
I am extremely grateful to 
Ilya Rohozhkin for preparing this text.

\section{The main result}

Fix a configuration of points $z=\{z^{1},\cdots,z^{n}\}$
in $\R{}P^{2}$, which is generic
generic (with respect to \cite{M}).
A braid $\beta$ is considered as
a one-parametric family $z_{t}=\{z^{1}_{t},\cdots,z^{n}_{t}\},$
where $z^{j}_{t}$ move continuously as $t$ runs from $0$ to $1$, and
$z_{0}$ and $z_{1}$ coincide as sets.
Denote the projectively dual lines by
$l_{t}^{1},\cdots, l_{t}^{n}$.

We place generators  $a,b,\cdots$ of the tropical field in the points, vertices of the
dual graph. Denote this graph by $D_{n}$.

The typical non-generic situation is when three
points are collinear. In this case, the dual
lines pass through the same point.

When passing through this event, we invert the
triangle. From the point of view of the dual
graph, we get an inversion of the triangle which looks
as shown in Fig. \ref{invtriangle}.

We place variables $a,b,\cdots$ mentioned
above in the  vertices of the
dual graph.

Hence, we get the Desargues flip shown in Fig. 
\ref{Desarguesflip}.

\begin{figure}
\centering\includegraphics[width=250pt]{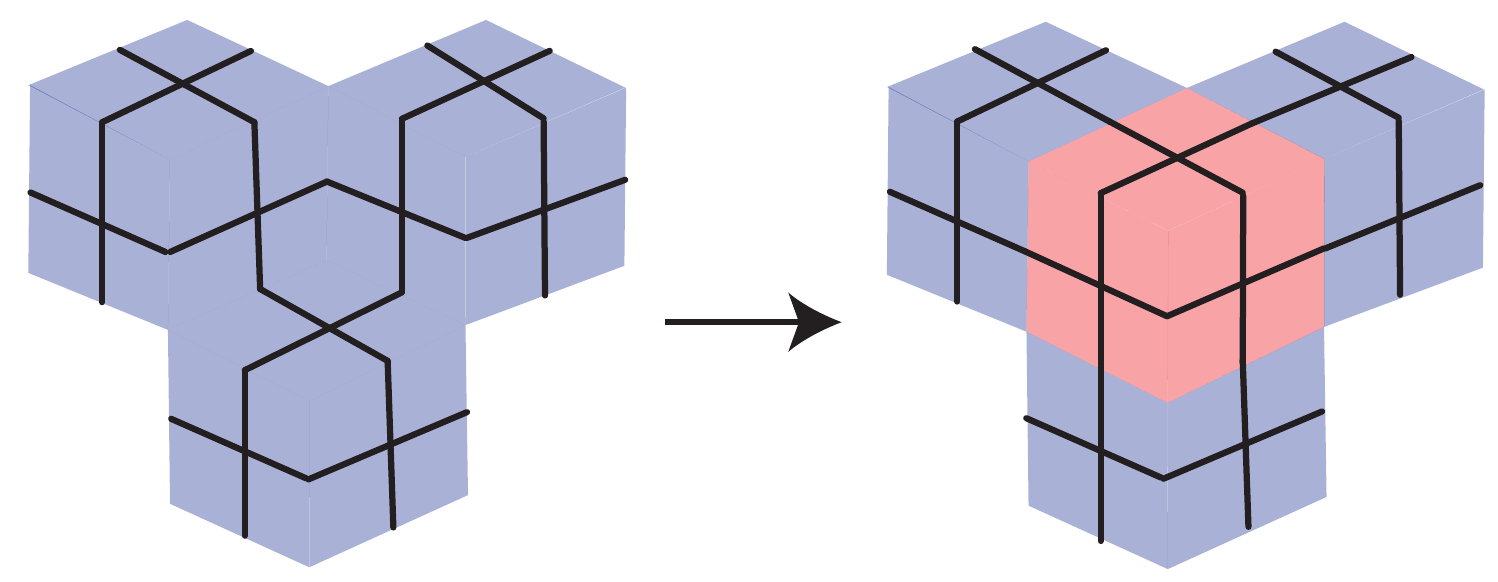}
\caption{Inverting the triangle}
\label{invtriangle}
\end{figure}

For each such flip we get a new vertex
$x'$ instead of $x$ which has to be labelled 
according to (2).

Hence, after applying all steps, we get
to the initial set of points $z^{1}=z^{0}$,
hence to the initial graph and the same dual
graph $D_{n}$ having some other labels.

\begin{theorem}
Isotopic braids give rise to identical transformations
of labels on the dual graph $D_{n}$.
\end{theorem}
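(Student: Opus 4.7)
The plan is to reduce the theorem to a finite list of local verifications, exactly as in \cite{M}, and then to check the nontrivial local identity. First I would fix a generic isotopy $\beta_s$, $s\in [0,1]$, connecting two given braid representatives, viewed as a two-parameter family $z^{j}_{t,s}$ of point configurations. By standard transversality, I can perturb this family so that the discriminant in the $(t,s)$-square decomposes into codimension-one strata (three collinear points at a single moment, producing a triangle inversion in the dual graph) meeting at finitely many codimension-two strata of two possible types: (i) two unrelated triangle inversions happening simultaneously in disjoint parts of the dual graph, and (ii) a single degenerate event in which several collinearities collide, producing the \emph{octagon event} described in \cite{M}.

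Second, I would observe that the label assignment depends only on the sequence of Desargues flips read off along a path in the $(t,s)$-square, and that two paths joining $(0,0)$ to $(1,1)$ in the complement of the codimension-two strata can be deformed into each other by finitely many local moves across the codimension-one strata and around codimension-two strata. Invariance across a codimension-one crossing is automatic because, along such a generic isotopy, crossing a stratum forwards and then backwards produces two consecutive inverse Desargues flips at the same vertex, and the formula $x'=(a\otimes d\oplus b\otimes e\oplus c\otimes f)\oslash x$ from (3) is an involution in $x$ with the same $a,b,c,d,e,f$ reappearing, so the vertex returns to its original label. For codimension-two strata of type (i), the two flips occur on vertex-disjoint hexagons of $D_n$, so the two orderings commute term by term in formula (3).

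The remaining and only substantive step is the verification at codimension-two strata of type (ii): that the two natural resolutions of the degenerate collinearity event, each giving a cyclic sequence of Desargues flips around an octagonal region of the dual graph, produce the \emph{same} final labeling. This is the octagon relation for the operation (3), and it is the combinatorial-algebraic translation of the Desargues theorem in the projective setting \cite{FP}; it specializes to (1) over $F$ and to (2) in the $(\max,+)$-tropical semifield. The main obstacle is precisely this identity: one must track how the six ``boundary'' labels $a,b,c,d,e,f$ of each hexagonal flip are themselves altered by the preceding flips in the octagonal cycle, and check that the two sequences of compositions of the rational (respectively tropical) map in (3) agree on the final vertex.

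Once the octagon identity is in place, the reduction above shows that any two isotopic braids induce the same transformation of labels on $D_n$. Since the black/white dichotomy of \cite{M} is no longer needed, the list of local moves is slightly shorter than in \cite{M}, and the octagon identity itself is the same one verified there; I would therefore quote the computation from \cite{M} for the octagon and give the transversality and local-move reduction in full here.
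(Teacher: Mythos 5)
Your proposal reproduces the paper's argument: a generic two-parameter family reduces invariance to three codimension-two checks (flip followed by its inverse, commutation of flips on disjoint hexagons, and the octagon relation), and you correctly identify the octagon identity for the map $x\to x'=(a\otimes d\oplus b\otimes e\oplus c\otimes f)\oslash x$ as the only substantive verification. The one difference is that you outsource that verification to \cite{M}, whereas the paper carries out the label computation explicitly (following \cite{ES}, with operations replaced by their tropical analogues); this is a reasonable shortcut, but the explicit check is short enough that it is worth including rather than citing.
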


\begin{proof}
The proof goes along the lines of \cite{M}.

One can consider a generic isotopy of braids. Standardly,
relations correspond to codimension 2 events (generators correspond to
codimension one events, which are the flips given above). This gives rise to three types
of transformations.

Consider two isotopic generic braids $\beta_{1}$ and $\beta_{2}$ for the same set of initial points $\{z\}$. 
We may assume that the isotopy between these
braids is {\em generic} in the sense of \cite{M}. 
The genericity of the isotopy means the following.
Standardly, when studying paths in the configuration
space, generators correspond to codimension 1 events.
In our case they are flips  and correspond
to triples of collinear points.
Similarly,
relations correspond to codimension 2 events.
Following \cite{M}, it suffices to consider
only three types of transformations of the isotopy.

One of them corresponds to the situation when we perform 
two Desargues flips in one direction and in the
opposite direction (see Fig. \ref{Desarguesflip2}) and return to the
initial labels because the maps
$x\to x'$ and $x'\to x$ are inverse to each other.

The other one (for more details see
\cite{M}) corresponds to the situation which can be characterised as
``independent events commute'': two flips occur in two hexagons having no common interior points can be performed in 
any sequence and give rise to the same result.
This is rather evident because variables taking
part in the equation change independently.

\begin{figure}
\centering\includegraphics[width=150pt]{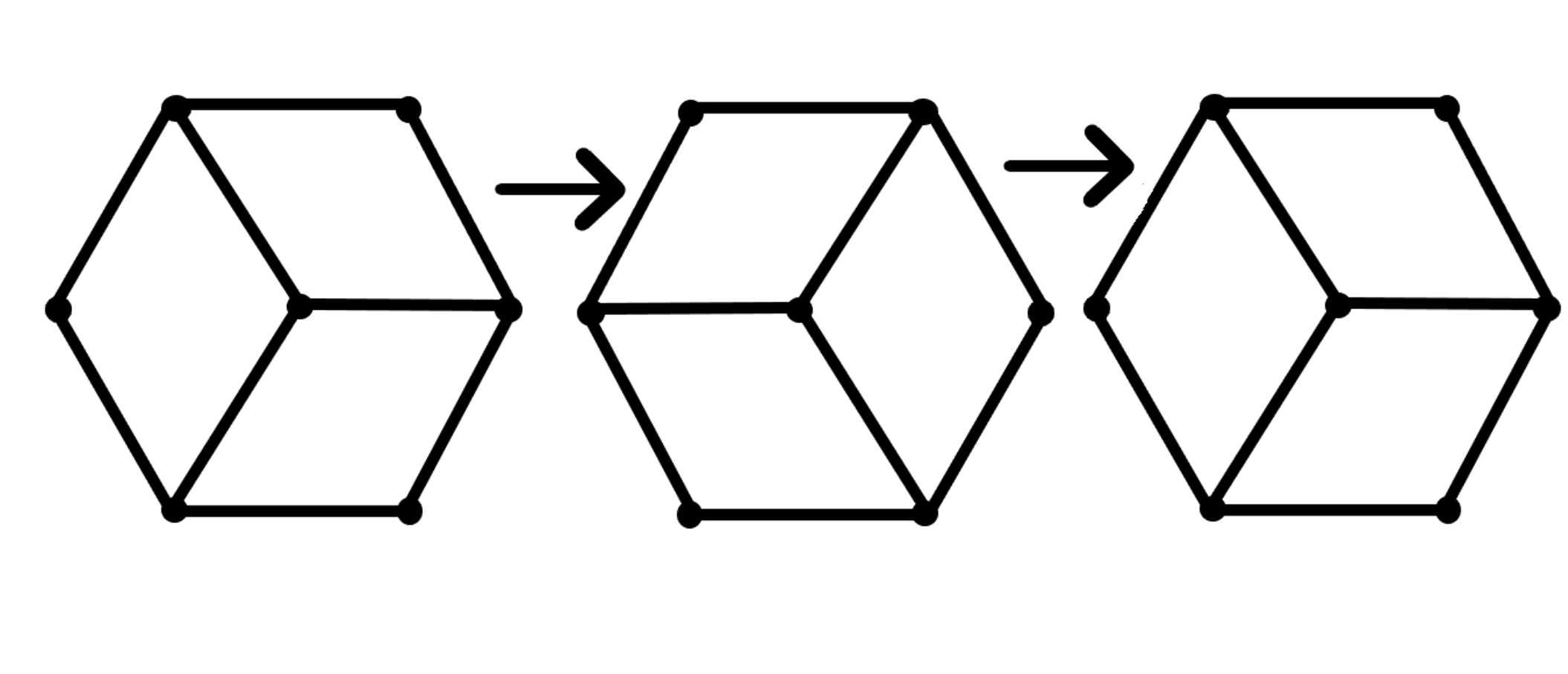}
\caption{The composition of two Desargues  flips}
\label{Desarguesflip2}
\end{figure}

The most important one is the octagon relation.
The main thing we have to check is that the transformation
(3) [and hence, (1) and (2)] satisfy the octagon relation.

This is actually shown by A.Enriques and D.Speyer \cite{ES};
the only thing we did in (\ref{enriquesspeyer})
is: we changed usual operations of multiplication,
divison, and addition by their tropical analogues.

\begin{figure}
\centering\includegraphics[width=350pt]{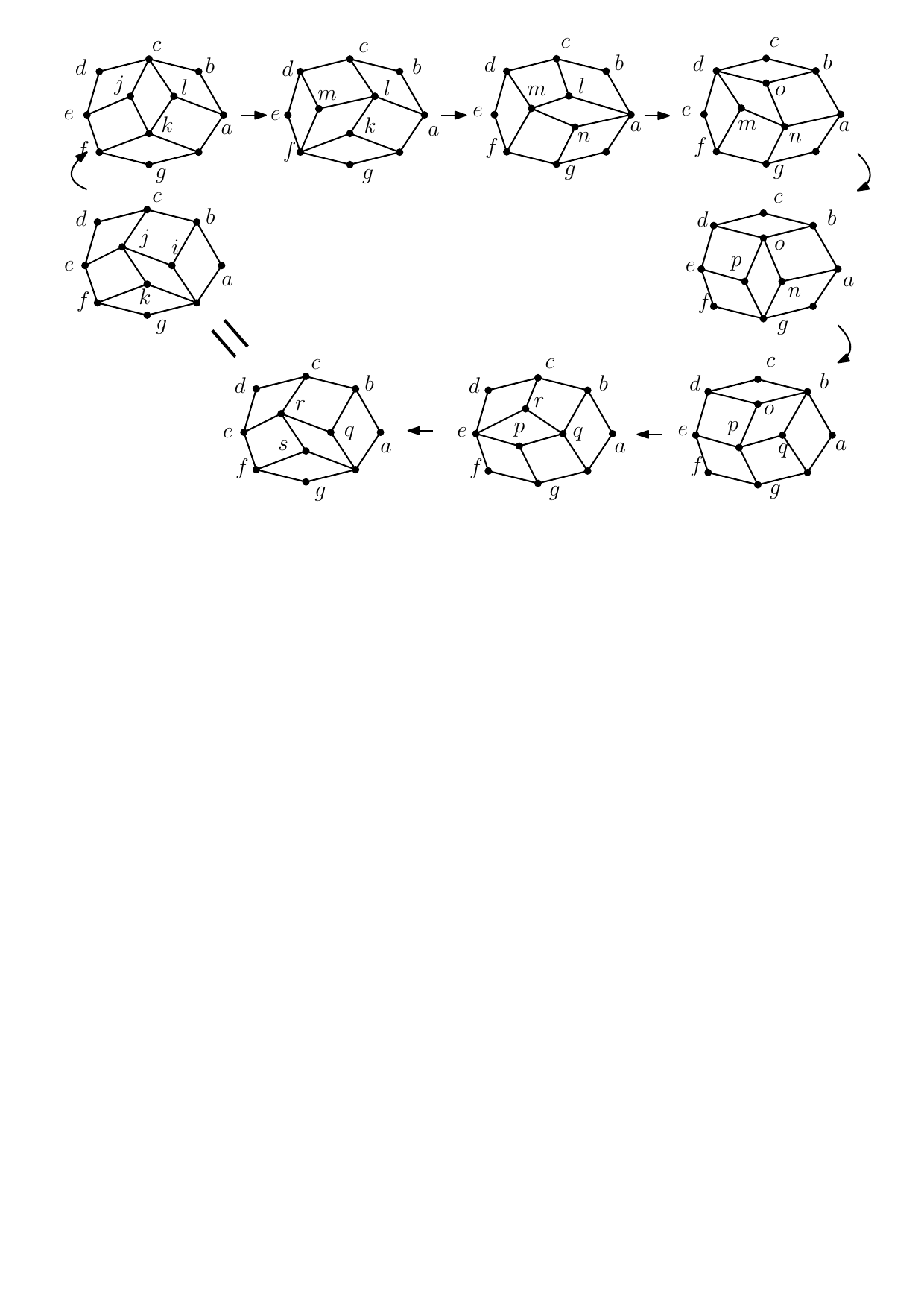}

\label{enriquesspeyer}
\end{figure}

Let us gradually perform the calculations for labels.

We have:
$$l=({a\otimes j\oplus b\otimes k\oplus c\otimes h})\oslash{i};$$

$$m=({c\otimes f\oplus d\otimes k\oplus e\otimes l})\oslash{j}$$

$$=({c\otimes f\otimes i\oplus d\otimes k\otimes i\oplus e\otimes a\otimes j\oplus e\otimes b\otimes k\oplus e\otimes c\otimes h})\oslash({i\otimes j});$$

$$n=({f\otimes a\oplus g\otimes l\oplus h\otimes m})\oslash{k}$$ 

$$=(f\otimes a\otimes i\otimes j\oplus g\otimes a\otimes j^2\oplus g\otimes b\otimes k\otimes j\oplus g\otimes c\otimes h\otimes j\oplus h\otimes c\otimes f\otimes i$$

$$\oplus
h\otimes d\otimes k\otimes i\oplus h\otimes e\otimes a\otimes j\oplus h\otimes e\otimes b\otimes k\oplus e\otimes c\otimes h^2)\oslash({i\otimes j\otimes k});$$

$$o=({a\otimes d\oplus b\otimes m\oplus c\otimes n})
\oslash{l}$$ $$=({b\otimes e\otimes k\oplus d\otimes 
i\otimes k\oplus c\otimes g\otimes j\oplus c\otimes 
f\otimes i\oplus c\otimes e\otimes h})\oslash({j\otimes k});$$

$$q=({h\otimes o\oplus a\otimes p\oplus b\otimes g})
\oslash{n}=i;$$

$$r=({b\otimes e\oplus c\otimes p\oplus d\otimes q})=j;$$

$$s=({e\otimes h\oplus f\otimes q\oplus g\otimes r})\oslash{p}=k.$$

This completes the proof.
\end{proof}

Certainly, once the construction work for {\em any
tropical field} (2), it also works for the classical
field of rational functions according to (3) and for the tropical field (4).

It is important to notice that there is a
{\em positivity phenomenon} in the cluster
algebra situation, saying that in the case of (3)
all labels will be not just rational functions but rather
{\em Laurent polynomials} in variables coresponding
to the initial graph.

\section{Further directions}

It is well known that invariants of braids can
be obtained from solutions of the Yang-Baxter
equations which correspond to the Reidemeister
moves.

In \cite{InvariantsAndPictures,Roh} the
author used the approach with Vorono\"{\i} 
diagrams
to get invariants of braids by using solutions
to the pentagon equations.

One of the main goals of the present paper as well
as \cite{M}
was: any solutions of octagon give braid invariants.

An important problem is to understand a deeper
relation between the Yang-Baxter, pentagon,
and octagon relations by using braid groups.

One more goal is to obtain invariants of 
knots and links from the invariants constructed
in \cite{M} and in the present paper.

\end{document}